\newtheorem{theorem}{Theorem}[section]
\newtheorem{lemma}[theorem]{Lemma}
\newtheorem{proposition}[theorem]{Proposition}
\newtheorem{corollary}[theorem]{Corollary}
\newtheorem{definition}[theorem]{Definition}
\newcommand\sspp{\mathop{\rm span}}
\newcommand\LC{\mathop{\rm \bf{LC}}}
\DeclareMathOperator{\LM}{\mathop{\rm \bf{LM}}}
\DeclareMathOperator{\DC}{\mathop{\rm \bf{DC}}}
\newcommand\Lat{\mathop{\rm lat}}
\newcommand{\fA}{{\mathfrak{A}}}
\newcommand{\fL}{{\mathfrak{L}}}
\newcommand{\fX}{{\mathfrak{X}}}
\newcommand{\cl}[1]{\mathcal{#1}}
\newcommand{\sca}[1]{\left\langle#1\right\rangle} %
\newcommand\Alg{\mathop{\rm alg}}
\DeclareMathOperator{\End}{End}
\newcommand{\J}{{\mathcal{J}}}
\begin{document}

\title{The reflexive closure of the adjointable operators}

\author{E. G. Katsoulis}



\address{Department of Mathematics, East Carolina University, Greenville, NC 27858, USA}

\email{katsoulise@ecu.edu}

\thanks{2010 {\it  Mathematics Subject Classification.}
46L08, 47L10}
\thanks{{\it Key words and phrases:} Hilbert $C^*$-module, adjointable operator, reflexive operator algebra, reflexive closure, invariant subspace, left centralizer, left multiplier.}


\begin{abstract} Given a Hilbert $C^*$-module $E$ over a C*-algebra $\cl A$, we give an explicit description for the invariant subspace lattice $\Lat \cl L (E)$ of all adjointable operators on $E$. We then show that the collection $\End_{\cl A}(E)$ of all bounded $\cl A$-module operators acting on $E$ forms the reflexive closure for $ \cl L (E) $, i.e., $\End _{\cl A} (E) = \Alg \Lat \cl L (E) $. Finally we make an observation regarding the representation theory of the left centralizer algebra of a $C^*$-algebra and use it to give an intuitive proof of a related result of H.~Lin. \end{abstract}

\maketitle

\section{Introduction}

In this note, $\cl A$ denotes a C*-algebra and $E$ a Hilbert C*-module over $\cl A$, i.e., a right $\cl A$-module equipped with an $\cl A$-valued inner product $\sca{\, , \, }$ so that the norm $\| \xi \|\equiv \| \sca{\xi , \xi }^{1/2} \|$ makes $E$ into a Banach space. The collection of all bounded $\cl A$-module operators acting on $E$ is denoted as $\End_{\cl A} (E)$. A linear operator $S$ acting on $E$ is said to be adjointable iff given $x , y \in E$ there exists $ y' \in E$ so that $\sca{S x , y} = \sca{x , y'}$. Elementary examples of adjointable operators are the ``rank one" operators $\theta_{\eta, \xi}$, defined by $\theta_{\eta, \xi}(x)\equiv \eta \sca{\xi, x}$, where $\eta, \xi , x \in E$. The collection of all adjointable operators acting on $E$ will be denoted as $\cl L (E)$ while the norm closed subalgebra generated by the rank one operators will be denoted as $\cl K (E)$.

 It is a well known fact that $\cl L (E) \subseteq \End_{\cl A} (E)$. However, the reverse inclusion is known to fail in general; this is perhaps the first obstacle one encounters when extending the theory of operators on a Hilbert space to that of operators on a Hilbert $C^*$-module. This problem has been addressed since the beginning of the theory \cite[page 447]{Pasc} and has influenced its subsequent development. The first few chapters of the monograph of Manuilov and Troitsky \cite{ManT} and the references therein provide the basics of the theory and give a good account of what is known regarding that issue. (See also \cite{Ble, lance1995hilbert}.) The purpose of this note is to demonstrate that the inequality between $\cl L (E) $ and $\End_{\cl A} (E)$ is intimately related to another area of continuing mathematical interest, the reflexivity of operator algebras.

If $\fA$ is a unital operator algebra acting on a Banach space $\fX$, then $\Lat \fA$ will denote the collection of all closed subspaces $M\subseteq \fX$ which are left invariant by $\fA$, i.e., $A(m)\in M$, for all $A \in \fA$ and $m \in M$. Dually, for a collection $\fL$ of closed subspaces of $\fX$, we write $\Alg \fL$ to denote the collection of all bounded operators on $\fX$ that leave invariant each element of $\fL$. The reflexive cover of an algebra $\fA$ of operators acting on $\fX$ is the algebra $\Alg \Lat \fA$; we say that $\fA$ is \textit{reflexive} iff
\[
\fA = \Alg \Lat \fA.
\]
 Similarly, the reflexive cover of a subspace lattice $\fL$ is the lattice $\Lat \Alg \fL$ and $\fL$ is said to be reflexive if $\fL = \Lat \Alg \fL$. A formal study of reflexivity for operator algebras and subspace lattices began with the work of Halmos \cite{Hal}, after Ringrose's proof \cite{Rin} that all nests on Hilbert space are reflexive. Since then, the concept of reflexivity for operator algebras and subspace lattices has been addressed by various authors on both Hilbert space \cite{An, ArP, a, DKP, Had2, Kak, KatP, Ol, Sar, ShuT} and Banach space \cite{BMZ, Erd, Had}, including in particular investigations on a Hilbert $C^*$-module.

The main results of this short note provide a link between the two areas of inquiry discussed above. In Theorem \ref{main} we show that the presence of bounded but not adjointable module operators on a $C^*$-module $E$ is equivalent to the failure of reflexivity for $\cl L (E)$. (Here we think of $\cl L (E)$ simply as an operator algebra acting on $E$.) Actually, we do more: we explicitly describe $\Lat \cl L (E)$ and we show that as a complete lattice, $\Lat \cl L (E)$ is isomorphic to the lattice of closed left ideals of  $\overline{\sca{E,E}} $ (Theorem~\ref{lat}). A key step in the proof of Theorem \ref{main} is a classical result of Barry Johnson \cite[Theorem 1]{Jo}. Actually, our Theorem \ref{main} can also be thought of as a generalization of Johnson's result, since its statement reduces to the statement of \cite[Theorem 1]{Jo}, when applied to the case of the trivial (unital) Hilbert $C^*$-module.

Another interpretation for the inequality between $\cl L (E)$ and $\End_{\cl A} (E)$ comes from the work of H. Lin. Lin shows in \cite[Theorem 1.5]{Lin} that $\End_{\cl A} (E)$ is isometrically isomorphic as a Banach algebra to the left centralizer algebra of $\cl K (E)$. Furthermore, the isomorphism Lin constructs extends the familiar $*$-isomorphism between $\cl L (E)$ and the double centralizer algebra of $\cl K (E)$. This shows that the gap between $\cl L (E)$ and $\End_{\cl A} (E)$ is solely due to the presence of left centralizers for $\cl K (E)$ which fail to be double centralizers. In Proposition~\ref{repn} we observe that the representation theory of the left centralizer algebra of a $C^*$-algebra is flexible enough to allow the use of representations on a Banach space. This leads to yet another short proof of Lin's Theorem, which we present in Theorem~\ref{Linthm}. Our proof makes no reference to Cohen's Factorization Theorem and its only prerequisite is the existence of a contractive approximate identity for a $C^*$-algebra. (Compare also with \cite[Proposition 8.1.16 (ii)]{Ble}.)

A final remark. Johnson's Theorem \cite[Theorem 1]{Jo}, which plays a central role in this paper, may no longer be true for Banach algebras which are not semisimple. Nevertheless there are specific classes of (non-semisimple) operator algebras for which this theorem is actually valid. This is being explored in a subsequent work \cite{Katsnew}.

\section{the main result}

We begin by identifying a useful class of subspaces of $E$.

\begin{definition} \label{defn:E}
Let $E$ a Hilbert C*-module over a C*-algebra $\cl A$.
If $\J \subseteq \cl A$, then we define
$$E(\J):=\overline{\sspp}\{\xi a \mid \xi\in E, a\in \J\}.$$
\end{definition}

The correspondence $\J \mapsto E(\J)$ of Definition~\ref{defn:E} is not bijective. Indeed, if $l(\J)$ is the closed left ideal generated by $\J\subseteq \cl A$, then it is easy to see that $E(l(\J))=E(\J)$. Therefore we restrict our attention to closed left ideals of $\cl A$. It turns out that an extra step is still required to ensure bijectivity. First we need the following.

\begin{lemma}\label{descr}
Let $E$ be a Hilbert C*-module over a C*-algebra $\cl A$ and let $\J\subseteq \cl A$ be a closed left ideal.
Then
\[
E(\J)=\{ \xi \in E \mid \sca{\eta , \xi} \in \J \mbox{ for all } \eta \in E\}.
\]
\end{lemma}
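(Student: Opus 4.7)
The plan is to prove the two set inclusions separately; denote the right-hand side by
\[
F := \{ \xi \in E \mid \sca{\eta , \xi} \in \J \mbox{ for all } \eta \in E\}.
\]
Observe first that $F$ is a closed linear subspace of $E$, since the inner product is continuous and $\cl A$-linear in its second argument and $\J$ is closed.

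For the inclusion $E(\J) \subseteq F$: given a generator $\zeta a$ with $\zeta \in E$ and $a \in \J$, and any $\eta \in E$, we have $\sca{\eta , \zeta a} = \sca{\eta , \zeta} a$, which lies in $\J$ because $\J$ is a left ideal. Hence the spanning set of $E(\J)$ lies in $F$, and the claim follows by closedness of $F$.

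The reverse inclusion $F \subseteq E(\J)$ is the substantive direction. Let $\xi \in F$; taking $\eta = \xi$ gives $\sca{\xi , \xi} \in \J$. The idea is to approximate $\xi$ from within $E(\J)$ by right multiplication. Invoke the classical fact that every closed left ideal $\J$ of a $C^*$-algebra admits a right contractive approximate identity $(u_\lambda)$ with $u_\lambda \in \J$, $0 \leq u_\lambda \leq 1$, satisfying $a u_\lambda \to a$ for every $a \in \J$. Each $\xi u_\lambda$ then lies in $E(\J)$ by construction, and the standard Hilbert $C^*$-module computation
\[
\|\xi - \xi u_\lambda\|^2 = \|\sca{\xi , \xi} - \sca{\xi , \xi} u_\lambda - u_\lambda \sca{\xi , \xi} + u_\lambda \sca{\xi , \xi} u_\lambda\|
\]
shows this quantity tends to zero: since $\sca{\xi , \xi} \in \J$ is self-adjoint and the $u_\lambda$ are self-adjoint, each of the three terms involving $u_\lambda$ converges to $\sca{\xi , \xi}$ (the two-sided case being a routine triangle inequality using $\|u_\lambda\| \leq 1$). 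Hence $\xi = \lim_\lambda \xi u_\lambda \in E(\J)$.

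The only nontrivial input is the existence of the right contractive approximate identity $(u_\lambda)$ supported in $\J$, which is classical $C^*$-algebra theory; once that is cited the argument is essentially mechanical.
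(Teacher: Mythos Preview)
Your proof is correct. Both you and the paper prove the reverse inclusion by approximating $\xi\in F$ from inside $E(\J)$ via right multiplication by elements of $\J$, using that $\sca{\xi,\xi}\in\J$. The only difference is in the choice of approximating elements: the paper invokes the single explicit formula
\[
\xi=\lim_{\epsilon\to 0}\xi\,\sca{\xi,\xi}\bigl[\sca{\xi,\xi}+\epsilon\bigr]^{-1}
\]
from \cite[Lemma 1.3.9]{ManT}, noting that the functional calculus element $\sca{\xi,\xi}[\sca{\xi,\xi}+\epsilon]^{-1}$ lies in $\J$; you instead cite the existence of a positive contractive right approximate identity $(u_\lambda)$ in an arbitrary closed left ideal and compute $\|\xi-\xi u_\lambda\|\to 0$ directly. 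Your route is slightly more general in flavour (it does not single out the element $\sca{\xi,\xi}$ beyond its membership in $\J$), while the paper's is a touch more self-contained (only one external fact is needed, and it is specific to Hilbert $C^*$-modules). Substantively the two arguments are the same idea expressed with different off-the-shelf tools.
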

\begin{proof} The inclusion
\[
E(\J) \subseteq \{ \xi \in E \mid \sca{\eta , \xi} \in \J \mbox{ for all } \eta \in E\}
\]
is obvious. The reverse inclusion follows from the well known fact \cite[Lemma 1.3.9]{ManT} that
\[
\xi = \lim_{\epsilon \rightarrow 0} \xi \sca{\xi , \xi}[ \sca{\xi , \xi } + \epsilon ]^{-1}
\]
for any $\xi \in E$. \end{proof}

The following gives now a complete description for the lattice of invariant subspaces of the adjointable operators.

\begin{theorem} \label{lat}
Let $E$ a Hilbert C*-module over a C*-algebra $\cl A$. Then
\[
\Lat \cl L (E)= \{ E(\J) \mid \J \subseteq \overline{\sca{E,E}} \mbox{ closed left ideal } \}
\]
and the association $\J \mapsto E(\J)$ establishes a complete lattice isomorphism between the closed left ideals of $\overline{\sca{E,E}} $ and $\Lat \cl L (E)$.

In addition,
\[
\Lat \cl K (E) =  \Lat \cl L (E) = \Lat \End _{\cl A} (E).
\]
\end{theorem}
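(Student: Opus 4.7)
The plan is to build an explicit inverse for the map $\J\mapsto E(\J)$ at the level of $\Lat \cl K(E)$, the largest of the three lattices in the chain
\[
\Lat \End_{\cl A}(E)\subseteq \Lat \cl L(E)\subseteq \Lat \cl K(E),
\]
which is automatic from the algebra inclusions $\cl K(E)\subseteq \cl L(E)\subseteq \End_{\cl A}(E)$. Each $E(\J)$ already lies in $\Lat \End_{\cl A}(E)$: every $T\in \End_{\cl A}(E)$ is right $\cl A$-linear and hence sends a generator $\xi a$ (with $a\in\J$) back into $E\cdot \J\subseteq E(\J)$. So if I can show that every $M\in\Lat\cl K(E)$ has the form $E(\J(M))$ for the candidate
\[
\J(M):=\overline{\sspp}\{\sca{\eta,\xi}\mid \eta\in E,\,\xi\in M\},
\]
the entire chain collapses and only the lattice-isomorphism part remains to be verified.

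The heart of the proof is therefore $M=E(\J(M))$ for $M\in \Lat \cl K(E)$. First, the identity $\sca{\zeta,\mu}\sca{\eta,\xi'}=\sca{\zeta,\mu\sca{\eta,\xi'}}=\sca{\zeta,\theta_{\mu,\eta}(\xi')}$, combined with the $\cl K(E)$-invariance of $M$, gives $\sca{E,E}\cdot\sca{E,M}\subseteq \sca{E,M}$, so by density $\J(M)$ is a closed left ideal of $\overline{\sca{E,E}}$ (hence of $\cl A$, since $\overline{\sca{E,E}}$ is a two-sided ideal of $\cl A$) and Lemma \ref{descr} applies. The inclusion $M\subseteq E(\J(M))$ is then immediate. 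For the reverse, take $\xi\in E(\J(M))$; Lemma \ref{descr} puts $\sca{\xi,\xi}\in \J(M)$, so continuous functional calculus puts $b_\epsilon:=\sca{\xi,\xi}[\sca{\xi,\xi}+\epsilon]^{-1}$ into $\J(M)$ as well (the defining function vanishes at $0$, so $b_\epsilon$ is a norm limit of polynomials in $\sca{\xi,\xi}$ without constant term). The standard identity $\xi=\lim_{\epsilon\to 0}\xi b_\epsilon$ reduces the task to showing $\xi b\in M$ for every $b\in \J(M)$. Approximating $b=\lim\sum_i\sca{\eta_i,\zeta_i}$ with $\zeta_i\in M$, one obtains $\xi b=\lim\sum_i \theta_{\xi,\eta_i}(\zeta_i)\in M$ by the rank-one invariance of $M$.

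The companion identity $\J=\J(E(\J))$ is handled symmetrically: $\supseteq$ is Lemma \ref{descr}, and for $\subseteq$ I pick $a\in\J$ together with an approximate identity $\{e_\mu\}$ of $\overline{\sca{E,E}}$, approximate each $e_\mu$ by $\sum_i\sca{\eta_i,\xi_i}$, and note that $\sca{\zeta,\xi_i a}=\sca{\zeta,\xi_i}a\in\J$ for every $\zeta\in E$, so that each $\xi_i a\in E(\J)$ by Lemma \ref{descr}; hence $e_\mu a\approx \sum_i\sca{\eta_i,\xi_i a}\in\J(E(\J))$ and $a=\lim e_\mu a\in \J(E(\J))$. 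Since both $\J\mapsto E(\J)$ and $M\mapsto \J(M)$ are order preserving and mutually inverse, and Lemma \ref{descr} directly gives $E(\bigcap_\alpha \J_\alpha)=\bigcap_\alpha E(\J_\alpha)$, the correspondence is a complete lattice isomorphism. The one delicate step, I expect, is the $\cl K(E)$-invariance argument in the second paragraph: turning an approximation of $b$ inside $\J(M)$ into an approximation of $\xi b$ by rank-one images of vectors already sitting in $M$ is exactly what lets invariance under the small algebra $\cl K(E)$ imply invariance under the much larger $\End_{\cl A}(E)$, and thereby collapses the chain of lattices.
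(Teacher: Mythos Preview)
Your proof is correct and follows essentially the same route as the paper's: define the inverse map $M\mapsto \J(M)$, verify the two round-trip identities $M=E(\J(M))$ and $\J=\J(E(\J))$ using rank-one operators and an approximate identity, and conclude the lattice isomorphism together with the collapse of the chain $\Lat\End_{\cl A}(E)\subseteq\Lat\cl L(E)\subseteq\Lat\cl K(E)$. One small redundancy: in proving $E(\J(M))\subseteq M$ your functional-calculus detour through $b_\epsilon$ is unnecessary, since your final computation already shows $\xi b\in M$ for \emph{every} $\xi\in E$ and $b\in\J(M)$, and these elements generate $E(\J(M))$ by definition; the paper proceeds directly in this way.
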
	

\begin{proof} First observe that if $ \J \subseteq \cl A$ is a closed left ideal, then the subspace $E(\J)$ is invariant under $\cl L (E)$, because $\cl L (E)$ consists of $\cl A$-module operators.

Conversely assume that $M \in \Lat \cl L (E)$ and let
\[
J(M) \equiv \overline{\sspp}\{ \sca{ \eta , m } \mid \eta \in E \mbox{ and }m \in M \}.
\]
Clearly, $J(M)\subseteq \overline{\sca{E,E}}$ and the identity
\[
a\sca{ \eta , m} = \sca{ \eta a^*  , m}, \, a \in \cl A, \eta  \in E, m \in M,
\]
implies that $J(M)$ is a left ideal. We claim that $M=E(J(M))$. Indeed, if $m \in M$, then by the definition of $J(M)$ we have $\sca{ \eta , m } \in J(M)$, for all $\eta \in E$, and so Lemma \ref{descr} implies that $m \in E(J(M))$. On the other hand, any $\xi a$, with $\xi \in E$ and $a \in J(M)$ is the limit of finite sums of elements of the form $\xi \sca{ \eta , m}$, where $\eta \in E$ and $m \in M$. However
\[
\xi \sca{ \eta , m}= \theta_{\xi , \eta}(m) \in M
\]
and so $M=E(J(M))$. This shows that $\J \mapsto E(\J)$ is surjective.

In order to prove that $\J \mapsto E(\J)$ is also injective we need to verify that $\J = J(E(\J))$, for any closed ideal $\J\subseteq \overline{\sca{E,E}}$. Since $\J \subseteq \overline{\sca{E,E}} $ is a left ideal, $J(E(\J)) \subseteq \J$. On the other hand, if $(e_i)_i$ is a right approximate identity for $\J$, then any element of $\J\subseteq \overline{\sca{E,E}}$ can be approximated by elements of the form
\[
\sum_{k} \, \sca{\eta_k ,  \xi_k} e_k = \sum_{k} \, \sca{\eta_k ,  \xi_k e_k},\quad  \eta_k , \xi_k \in E.
\]
However, $\xi_k e_k \in E(\J)$, by Definition \ref{defn:E}, and so sums of the above form belong to $J(E(\J)) $. Hence $\J \subseteq J(E(\J))$ and so $\J \mapsto E(\J)$ is also injective with inverse $M \mapsto J(M)$.

The proof that $\J \mapsto E(\J)$ respects the lattice operations follows from two successive applications of Lemma \ref{descr}. Indeed, if $(\J_i)_i$ is a collection of closed ideals of $\overline{\sca{E,E}}$, then $\xi \in \cap_i E(\J_i)$ is equivalent by Lemma~\ref{descr} to $\sca{\eta , \xi} \in \cap_i \J_i$ which, once again by Lemma~\ref{descr}, is equivalent to $\xi \in E(\cap_i \J_i)$. Therefore $\cap_i E(\J_i)  = E(\cap_i \J_i)$. The proof of $\vee_i E(\J_i)  = E(\vee_i \J_i)$ is immediate.

For the final assertion of the theorem, first note that 
\[
\Lat \cl K (E) \supseteq  \Lat \cl L (E) \supseteq \Lat \End _{\cl A} (E).
\]
On the other hand, if $M \in \Lat \cl K (E)$, then an argument identical to that of the second paragraph of the proof shows that $M=E(J(M))$. Hence $M \in \Lat \End _{\cl A} (E)$ and the conclusion follows.
\end{proof}

The following result was proved by B. Johnson \cite[Theorem 1]{Jo} for arbitrary semisimple Banach algebras by making essential use of their representation theory. One can adopt Johnson's original proof to the C*-algebraic context by using the GNS construction and Kadison's Transitivity Theorem wherever representation theory is required in the original proof.

\begin{theorem} \label{Johnson}
Let $\cl A$ be a $C^*$-algebra and let $\Phi$ be a linear operator acting on $\cl A$ that leaves invariant all closed left ideals of $\cl A$. Then $\Phi (ba)=\Phi(b)a$, $\forall\, a,b \in \cl A$. In particular, if $1 \in \cl A$ is a unit then $\Phi$ is the left multiplication operator by $\Phi (1)$.
\end{theorem}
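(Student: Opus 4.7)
The plan is to follow the suggestion in the paragraph preceding the statement and adapt Johnson's representation-theoretic argument to the $C^*$-algebraic setting, substituting irreducible $*$-representations of $\cl A$ together with Kadison's Transitivity Theorem for the Jacobson density tools used in the original proof. By Gelfand--Naimark the irreducible $*$-representations of $\cl A$ separate points, so it will suffice to fix such a representation $\pi \colon \cl A \to B(H)$ and establish the operator identity $\pi(\Phi(ba)) = \pi(\Phi(b))\pi(a)$ on $H$.

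First I would fix $\pi$ and note that, for every nonzero $\xi \in H$, the set
\[
L_\xi := \{ x \in \cl A : \pi(x)\xi = 0 \}
\]
is a closed left ideal of $\cl A$, so by hypothesis $\Phi(L_\xi) \subseteq L_\xi$. Kadison transitivity yields $\pi(\cl A)\xi = H$ and hence a linear bijection $\cl A / L_\xi \to H$, $a + L_\xi \mapsto \pi(a)\xi$. Through this isomorphism $\Phi$ descends to a well-defined linear map $\tilde\Phi_\xi \colon H \to H$ characterized by
\[
\tilde\Phi_\xi(\pi(a)\xi) = \pi(\Phi(a))\xi, \qquad a \in \cl A.
\]

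The heart of the argument, and the step I expect to be the main obstacle, is to show that $\tilde\Phi_\xi$ is independent of $\xi$. For this I would use Kadison transitivity in its two-vector form: if $\xi, \eta \in H$ are linearly independent, then for any $u, v \in H$ there exists $a \in \cl A$ with $\pi(a)\xi = u$ and $\pi(a)\eta = v$. Applying the defining identities for $\tilde\Phi_\xi$, $\tilde\Phi_\eta$, and $\tilde\Phi_{\xi+\eta}$ to this single $a$ and exploiting $\pi(a)(\xi+\eta) = u + v$ gives the additivity relation
\[
\tilde\Phi_{\xi + \eta}(u + v) = \pi(\Phi(a))(\xi + \eta) = \tilde\Phi_\xi(u) + \tilde\Phi_\eta(v),
\]
valid for arbitrary $u, v \in H$. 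Setting first $v = 0$ and then $u = 0$ forces $\tilde\Phi_\xi = \tilde\Phi_{\xi + \eta} = \tilde\Phi_\eta$; the case $\eta = c\xi$ is immediate from the scaling already present in the defining identity. Hence a single linear map $\tilde\Phi \colon H \to H$ unifies all the $\tilde\Phi_\xi$.

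The remaining step is then routine. For $a, b \in \cl A$ and $\xi \in H$ with $\pi(a)\xi \neq 0$, applying the characterization of $\tilde\Phi$ with the base vector $\pi(a)\xi$ gives
\[
\pi(\Phi(ba))\xi = \tilde\Phi(\pi(b)\pi(a)\xi) = \pi(\Phi(b))\pi(a)\xi = \pi(\Phi(b)a)\xi,
\]
while if $\pi(a)\xi = 0$ then $ba \in L_\xi$, so both $\pi(\Phi(ba))\xi$ and $\pi(\Phi(b)a)\xi$ vanish. Thus $\pi(\Phi(ba)) = \pi(\Phi(b)a)$ for every irreducible $\pi$, and the separating property of such representations gives $\Phi(ba) = \Phi(b)a$ in $\cl A$; the unital case follows by setting $b = 1$.
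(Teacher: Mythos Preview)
Your proposal is correct and carries out precisely the adaptation of Johnson's argument that the paper itself sketches (without giving details): replace the strictly irreducible Banach-algebra representations in Johnson's proof by irreducible $*$-representations of $\cl A$, and use Kadison's Transitivity Theorem to obtain the algebraic irreducibility needed to define the maps $\tilde\Phi_\xi$ and to run the two-vector comparison showing they are independent of $\xi$. The paper provides no further proof beyond this suggestion, so your write-up is exactly in line with the intended argument.
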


Note that the proof of Theorem~\ref{lat} shows that any bounded $\cl A$-module map leaves invariant $\Lat \cl L (E)$. This establishes one direction in the following, which is the main result of the paper.

\begin{theorem} \label{main}
Let $E$ be a Hilbert module over a C*-algebra $\cl A$. Then
\[
 \Alg \Lat \cl L (E) = \End _{\cl A} (E).
 \]
 In particular, $\End_{\cl A} (E)$ is a reflexive algebra of operators acting on $E$.
\end{theorem}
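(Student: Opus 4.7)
My plan is to establish the remaining inclusion $\Alg \Lat \cl L(E) \subseteq \End_{\cl A}(E)$, since the reverse containment has already been noted just above the theorem. I fix $T \in \Alg \Lat \cl L(E)$ and aim to verify the module identity $T(\xi a) = T(\xi)\,a$ for every $\xi \in E$ and every $a \in \cl A$. The reduction will be to Theorem~\ref{Johnson} applied to a two-parameter family of auxiliary operators on $\cl A$ cooked up from $T$.

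The key construction is, for each pair $\eta, \xi \in E$, the bounded linear map
\[
\Psi_{\eta,\xi} : \cl A \longrightarrow \cl A, \qquad a \longmapsto \sca{\eta, T(\xi a)}.
\]
I first check that $\Psi_{\eta,\xi}$ leaves invariant every closed left ideal $\J$ of $\cl A$: if $a \in \J$ then Definition~\ref{defn:E} gives $\xi a \in E(\J)$; the opening observation of the proof of Theorem~\ref{lat} gives $E(\J) \in \Lat \cl L(E)$; since by hypothesis $T$ preserves this subspace, $T(\xi a) \in E(\J)$, and Lemma~\ref{descr} then forces $\Psi_{\eta,\xi}(a) = \sca{\eta, T(\xi a)} \in \J$. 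Theorem~\ref{Johnson} now yields $\Psi_{\eta,\xi}(ba) = \Psi_{\eta,\xi}(b)\,a$, i.e.\ $\sca{\eta,\,T(\xi b a) - T(\xi b)\,a} = 0$ for every $\eta \in E$ and every $a, b \in \cl A$.

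To remove the $\eta$ I use non-degeneracy of the $\cl A$-valued inner product (taking $\eta$ to be the vector in the right slot) and obtain $T(\xi b a) = T(\xi b)\,a$ for every $\xi \in E$ and $a, b \in \cl A$. This is the module identity restricted to vectors of the form $\xi b$; for an arbitrary $\xi' \in E$ I invoke the approximation
\[
\xi' = \lim_{\epsilon \to 0} \xi'\,\sca{\xi',\xi'}\,[\,\sca{\xi',\xi'}+\epsilon\,]^{-1}
\]
already used in Lemma~\ref{descr}, which realizes $\xi'$ as a norm-limit of vectors $\xi' b_\epsilon$ with $b_\epsilon \in \cl A$, and then pass to the limit using boundedness of $T$ and of right multiplication by $a$.

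The only real design choice in this argument is the construction of $\Psi_{\eta,\xi}$: both its domain and codomain must be $\cl A$ so that Theorem~\ref{Johnson} applies, while the hypothesis $T \in \Alg \Lat \cl L(E)$ must translate cleanly into the invariance of all closed left ideals of $\cl A$. The dictionary $\J \mapsto E(\J)$ of Theorem~\ref{lat} together with Lemma~\ref{descr} performs precisely this translation, after which Johnson's Theorem plus a routine approximation finish the proof.
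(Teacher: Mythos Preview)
Your proof is correct and follows essentially the same route as the paper: both define the auxiliary map $a \mapsto \sca{\eta, T(\xi a)}$, verify via the dictionary $\J \leftrightarrow E(\J)$ that it preserves all closed left ideals, and invoke Johnson's Theorem. The only cosmetic difference is in the final approximation step: the paper uses a contractive approximate unit $(e_i)$ of $\cl A$ (computing $\sca{\eta,S(\xi a)}=\lim_i\sca{\eta,S(\xi e_i a)}=\lim_i\sca{\eta,S(\xi e_i)}a=\sca{\eta,S(\xi)}a$ directly inside the inner product), whereas you first cancel $\eta$ by non-degeneracy and then approximate $\xi'$ via $\xi' b_\epsilon$ with $b_\epsilon=\sca{\xi',\xi'}[\sca{\xi',\xi'}+\epsilon]^{-1}$; both devices exploit the same underlying fact $\overline{E\cl A}=E$.
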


\begin{proof}
Let $S \in \Alg \Lat \cl L (E)$ and $\xi , \eta \in E$. Consider the linear operator
 \[
 \Phi_{\eta, \xi}: \cl A \ni a \longmapsto \sca{\eta,  S(\xi a) } \in \cl A
 \]
 We claim that $\Phi_{\eta, \xi}$ leaves invariant any of the closed left ideals of $\cl A$. Indeed, if $\J  \subseteq \cl A$ is such an ideal and $j \in \J$, then $\xi j \in E(\J)$ and since $S \in \Alg \Lat \cl L $, $S(\xi j) \in E(\J)$. By Theorem~\ref{lat}, we have
 $$ \Phi_{\eta, \xi}(j)=\sca{\eta,  S(\xi j) } \in \J$$
 and so $ \Phi_{\eta, \xi}$ leaves $\J$ invariant, which proves the claim. Hence Theorem~\ref{Johnson}, implies now that  $\Phi_{\eta, \xi}(ba)= \Phi_{\eta, \xi}(b)a$, $\forall\, a,b \in \cl A$.

 Let $(e_i)$ be an approximate unit for $\cl A$. By the above $\Phi_{\eta, \xi}(e_ia) =\Phi_{\eta, \xi }(e_i)a$, $\forall i$, and so
\begin{align*}
\sca{\eta,  S(\xi a) } &=\lim_i \sca{\eta,  S(\xi e_i a) } = \lim_i \Phi_{\eta, \xi}(e_ia) \\
				&=\lim_i \Phi_{\eta, \xi }(e_i)a =\lim_i  \sca{\eta,  S(\xi e_i) } a \\
				&= \sca{\eta,  S(\xi ) } a
											\end{align*}
Hence
\[
 \sca{\eta,  S(\xi a) } = \sca{\eta, S(\xi)a}, \quad \forall a \in \cl A,
 \]
 which establishes that $S$ is an $\cl A$-module map.
\end{proof}

The above Theorem can also be thought as a generalization of Theorem~\ref{Johnson} (Johnson's Theorem) since its statement reduces to the statement of Theorem \ref{Johnson} when applied to the case of the trivial unital Hilbert $C^*$-module. 

\begin{corollary}
If $E$ is a selfdual Hilbert $C^*$-module, then $\cl L (E)$ is reflexive as an algebra of operators acting on $E$.
\end{corollary}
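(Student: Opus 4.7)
The plan is to reduce the corollary to Theorem~\ref{main} by showing that selfduality forces $\cl L(E) = \End_{\cl A}(E)$. Once that equality is established, Theorem~\ref{main} immediately gives $\Alg \Lat \cl L(E) = \End_{\cl A}(E) = \cl L(E)$, which is reflexivity.

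So the task is to verify that, under selfduality, every bounded $\cl A$-module map $T \in \End_{\cl A}(E)$ is in fact adjointable. Recall that $E$ is selfdual precisely when every bounded $\cl A$-linear map $\phi : E \to \cl A$ has the form $\phi(x) = \sca{\eta, x}$ for some $\eta \in E$. Given $T \in \End_{\cl A}(E)$ and an arbitrary $y \in E$, I would form the map
\[
\phi_y : E \longrightarrow \cl A, \qquad \phi_y(x) := \sca{y, T(x)}.
\]
Since $T$ is a bounded right $\cl A$-module operator and the inner product is right $\cl A$-linear in the second slot, one checks that $\phi_y(xa) = \sca{y, T(x)a} = \sca{y, T(x)}a = \phi_y(x)a$, so $\phi_y$ is a bounded $\cl A$-linear functional on $E$.

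By selfduality there exists $y' \in E$ with $\phi_y(x) = \sca{y', x}$ for every $x \in E$. Taking adjoints in $\cl A$ yields $\sca{T(x), y} = \sca{y, T(x)}^* = \sca{y', x}^* = \sca{x, y'}$ for all $x \in E$, which is exactly the adjointability condition. Thus $T \in \cl L(E)$, giving $\End_{\cl A}(E) \subseteq \cl L(E)$; the reverse inclusion is automatic. Combined with Theorem~\ref{main}, this completes the proof.

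There is essentially no obstacle here beyond correctly transcribing the selfduality hypothesis: the argument is a one-step application of the defining property of a selfdual module, together with the previously established Theorem~\ref{main}.
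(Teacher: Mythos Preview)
Your argument is correct and is exactly the intended one: the paper states the corollary without proof, relying implicitly on the standard fact that selfduality forces $\cl L(E)=\End_{\cl A}(E)$, after which Theorem~\ref{main} gives reflexivity. Your explicit verification via $\phi_y(x)=\sca{y,T(x)}$ and the selfduality representation $\phi_y=\sca{y',\,\cdot\,}$ is the canonical way to spell this out.
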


In particular, the above Corollary shows that if $\cl A$ is a unital $C^*$-algebra, then $\cl L ( \cl A^{(n)})$, $1\leq n<\infty$, is a reflexive operator algebra. This is not necessarily true for $\cl L ( \cl A^{(\infty)})$. Indeed in \cite[Example 2.1.2]{ManT} the authors give an example of a unital commutative $C^*$-algebra $\cl A$ for which $\cl L (\cl A^{(\infty)}) \neq \End_{\cl A}(\cl A^{(\infty)})$. By Theorem \ref{main}, $\cl L ( \cl A^{(\infty)})$ is not reflexive.

\section{Left Centralizers and a theorem of H. Lin}

An alternative description for the inclusion $\cl L (E) \subseteq \End_{\cl A}(E)$ has been given by H. Lin in \cite{Lin}.

\begin{definition} \label{centraldefn} If $\fA$ is  a Banach algebra then a linear and bounded map $\Phi: \fA \rightarrow \fA$ is called a left centralizer if $\Phi(ab)=\Phi(a)b$, for all $a,b \in \fA$. If in addition there exists a map $\Psi: \fA \rightarrow \fA$ so that $\Psi(a)b=a\Phi(b)$, for all $a,b \in \fA$, then $\Phi$ is called a double centralizer.
\end{definition}

The collection of all left (resp. double) centralizers equipped with the supremum norm will be denoted as $\LC (\fA)$ (resp. $\DC(\fA)$). Note that in the case where $\fA$ has an approximate unit, the linearity and boundedness of centralizers does not have to be assumed \textit{a priori} but instead follows from the condition $\Phi(ab)=\Phi(a)b$, for all $a,b \in \fA$. (See \cite{Jo2} for a proof; the unital case is of course trivial.)

In \cite[Theorem 1.5]{Lin} Lin shows that $\End_{\cl A}(E)$ is isometrically isomorphic as a Banach algebra to $\LC\left(\cl K (E) \right)$. Furthermore, the isomorphism Lin constructs extends the familiar $*$-isomorphism of Kasparov \cite{Kas} between $\cl L (E)$ and $\DC(\cl K(E))$. Lin's proof is similar in nature to that of Kasparov \cite{Kas} for the double centralizers of $\cl K (E)$. However it is more elaborate and also requires some additional results of Paschke \cite{Pasc}. In what follows we give an elementary proof of Lin's Theorem. Our argument depends on the observation that the representation theory for the left centralizers of a $C^*$-algebra $\cl A$ is flexible enough to allow the use of representations on a Banach space.

\begin{definition}
Let $\fX$ be a Banach space and let $\fA$ be a norm closed subalgebra of $B(\fX)$, the bounded operators on $\fX$. The left multiplier algebra of $\fA$ is the collection
\[
\LM_{\fX}(\fA) \equiv \{ b \in B(\fX)\mid ba \in \fA, \mbox{ for all } a \in \fA \}.
\]
If $b \in \LM_{\fX}(\fA)$, then $L_b \in B(\fA)$ denotes the left multiplication operator by~$b$.
\end{definition}

The following has also a companion statement for double centralizers, which we plan to state and explore elsewhere.

\begin{proposition} \label{repn}
Let $\cl A$ be a $C^*$-algebra and assume that $\cl A$ is acting isometrically and non-degenerately on a Banach space $\fX$. Then the mapping
\begin{equation} \label{Linmap}
\LM_{\fX}(\cl A) \longrightarrow \LC (\cl A)\colon b \longmapsto L_b
\end{equation}
establishes an isometric Banach algebra isomorphism between $\LM_{\fX}(\cl A)$ and $\LC (\cl A)$.
\end{proposition}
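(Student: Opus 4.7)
The plan is to show that the map $b \mapsto L_b$ is a well-defined, linear, multiplicative, isometric bijection, in four steps that largely parallel the proof of the analogous statement for double centralizers of $\cl K(E)$.

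First I would check the easy algebraic properties. If $b \in \LM_{\fX}(\cl A)$, then $L_b : \cl A \to \cl A$ is well-defined by the very definition of $\LM_{\fX}(\cl A)$, linear, and satisfies $L_b(ac) = (ba)c = L_b(a)c$, so it is a left centralizer. Linearity of $b \mapsto L_b$ is trivial, and the identity $L_{b_1 b_2}(a) = b_1 b_2 a = L_{b_1}(L_{b_2}(a))$ gives the homomorphism property.

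Next I would verify that the map is isometric. The inequality $\|L_b\|_{\cl A \to \cl A} \le \|b\|_{B(\fX)}$ follows from $\|L_b(a)\|_{\cl A} = \|ba\|_{B(\fX)} \le \|b\| \|a\|$, using that $\cl A$ acts isometrically so that the $\cl A$-norm and operator norm of $ba \in \cl A$ coincide. For the reverse direction, fix a contractive approximate identity $(e_i)$ of $\cl A$. Since the action is non-degenerate and the net $(e_i)$ is uniformly bounded in $B(\fX)$, a standard $\varepsilon/3$-argument shows that $e_i \to I$ in the strong operator topology on $\fX$. Hence for every $\xi \in \fX$,
\[
\|b \xi\| = \lim_i \|b e_i \xi\| \le \|\xi\| \cdot \liminf_i \|b e_i\|_{B(\fX)} = \|\xi\| \cdot \liminf_i \|L_b(e_i)\|_{\cl A} \le \|L_b\| \|\xi\|,
\]
so $\|b\| \le \|L_b\|$. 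In particular the map is injective.

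Finally, for surjectivity, I would construct a preimage for a given $\Phi \in \LC(\cl A)$ by extending $\Phi$ by strong continuity along $(e_i)$. The net $\{\Phi(e_i)\}_i \subset \cl A \subset B(\fX)$ is uniformly bounded by $\|\Phi\|$, so it suffices to show that $\Phi(e_i)\xi$ converges for $\xi$ in the dense subspace $\cl A \fX$. For $\xi = a \zeta$ with $a \in \cl A$, $\zeta \in \fX$, the left centralizer identity gives $\Phi(e_i)(a\zeta) = \Phi(e_i a)\zeta \to \Phi(a)\zeta$, using $e_i a \to a$ in $\cl A$ and continuity of $\Phi$. Thus $b\xi := \lim_i \Phi(e_i)\xi$ defines a bounded operator on $\fX$ of norm at most $\|\Phi\|$, and the computation above shows $b(a\zeta) = \Phi(a)\zeta$, i.e., $ba = \Phi(a) \in \cl A$ as operators on $\fX$. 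Therefore $b \in \LM_{\fX}(\cl A)$ with $L_b = \Phi$.

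The step I expect to require the most care is the passage $e_i \to I$ in the strong operator topology of $\fX$, since both hypotheses on the representation (isometric and non-degenerate) enter critically here: non-degeneracy provides density of $\cl A \fX$, while the isometric hypothesis ensures $\|e_i\|_{B(\fX)} \le 1$, which together drive both the isometry computation in Step 2 and the convergence argument underlying the construction of $b$ in the surjectivity step.
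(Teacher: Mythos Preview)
Your proposal is correct and follows essentially the same route as the paper: the key step is the surjectivity argument, where both you and the paper define the preimage of a left centralizer $\Phi$ as the strong-operator limit of the bounded net $(\Phi(e_i))_i$, checking convergence first on the dense set $\cl A\fX$ via the centralizer identity $\Phi(e_i)a\zeta=\Phi(e_ia)\zeta$. The paper leaves the isometry and the remaining verifications to the Hilbert-space literature, whereas you spell them out; your explicit isometry argument using $e_i\to I$ strongly is correct and is exactly the kind of detail the paper is implicitly invoking.
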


\begin{proof} The statement of this Proposition is a well-known fact, provided that $\fX$ is a Hilbert space. In that case, in order to establish the surjectivity of (\ref{Linmap}) one starts with a contractive approximate unit $(e_i)_i$ for $\cl A$. If $B \in \LC\left(\cl A \right)$, then the net $( B(e_i))_i$ is bounded and therefore has at least one weak limit point $b \in B(\fX)$. The conclusion then follows by showing that $b \in \LM_{\fX}(\cl A)$. (See \cite[Proposition 3.12.3]{Ped} for a detailed argument.)

Bounded nets of operators on a Banach space need not have weak limits. However, the non-degeneracy of the action and the identity
\[
B(e_i)ax=B(e_ia)x, \,\, a \in \cl A , x \in \fX,
\]
guarantees that the net $(B(e_i)x)_i $ is convergent when $x$ ranges over a dense subset of $\fX$. Since $( B(e_i))_i$ is bounded, we obtain that $(B(e_i)x)_i $ is Cauchy (and thus convergent) for any $x \in \fX$. This establishes that $(B(e_i))_i $ converges pointwise to some bounded operator $b \in B(\fX)$, even when $\fX$ is assumed to be a Banach space. With this observation at hand, the rest of the proof now goes as in the Hilbert space case.
\end{proof}

We are in position now to give the promised proof for Lin's Theorem.

\begin{theorem} \label{Linthm}
Let $E$ be a Hilbert $C^*$-module over a $C^*$-algebra $\cl A$. Then there exists an isometric isomorphism of Banach algebras
\[
 \phi : \End_{\cl A}(E) \longrightarrow \LC\left(\cl K (E) \right),
  \]
whose restriction $\phi_{\mid \cl L(E)}$ establishes a $*$-isomorphism between $\cl L(E)$ and \break $\DC(\cl K (E))$.
\end{theorem}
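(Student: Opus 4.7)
My plan is to deduce Lin's theorem by applying Proposition~\ref{repn} with $\cl A$ replaced by the $C^*$-algebra $\cl K(E)$ and $\fX = E$, and then identifying the resulting left multiplier algebra $\LM_E(\cl K(E))$ with $\End_{\cl A}(E)$. Before invoking Proposition~\ref{repn}, I would verify its hypotheses: the action of $\cl K(E)$ on $E$ is isometric because $\cl K(E)$ sits inside $\cl L(E)\subseteq B(E)$ and the $C^*$-norm on $\cl K(E)$ coincides with the operator norm. Non-degeneracy, i.e.\ $\overline{\cl K(E)\cdot E} = E$, follows from the identity
\[
\xi = \lim_{\epsilon \to 0} \xi \sca{\xi , \xi}\bigl[\sca{\xi,\xi}+\epsilon\bigr]^{-1}
\]
used in Lemma~\ref{descr}, since each $\xi\sca{\xi,\xi}[\sca{\xi,\xi}+\epsilon]^{-1}$ has the form $\theta_{\xi,\xi}(\zeta_\epsilon)\in \cl K(E)\cdot E$ for an appropriate $\zeta_\epsilon$.

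The heart of the argument is to identify $\LM_E(\cl K(E)) = \End_{\cl A}(E)$ isometrically as subalgebras of $B(E)$. For the inclusion $\End_{\cl A}(E)\subseteq \LM_E(\cl K(E))$, a direct computation gives $S\theta_{\eta,\xi} = \theta_{S\eta,\xi}$ whenever $S$ is an $\cl A$-module map, and since the rank-one operators densely span $\cl K(E)$, it follows that $S\cdot \cl K(E)\subseteq \cl K(E)$. For the reverse inclusion, let $b\in \LM_E(\cl K(E))$ and pick a contractive approximate unit $(k_\lambda)$ for the $C^*$-algebra $\cl K(E)$; by the non-degeneracy established above, $k_\lambda \xi \to \xi$ for every $\xi \in E$. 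Since $bk_\lambda\in \cl K(E)\subseteq \cl L(E)$ is $\cl A$-linear, for any $\xi\in E$ and $a\in\cl A$,
\[
(bk_\lambda)(\xi a) = (bk_\lambda)(\xi)\,a = b(k_\lambda \xi)\,a,
\]
and passing to the limit using continuity of $b$ yields $b(\xi a) = b(\xi) a$, so $b\in \End_{\cl A}(E)$.

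With these two ingredients in place, composing the identification $\LM_E(\cl K(E))=\End_{\cl A}(E)$ with the isomorphism of Proposition~\ref{repn} produces the isometric Banach algebra isomorphism $\phi\colon \End_{\cl A}(E)\to \LC(\cl K(E))$ given by $\phi(S)(k) = Sk$. It remains to verify the statement about the restriction to $\cl L(E)$. If $S\in \cl L(E)$, I would exhibit the partner $\Psi(k) := kS$ for $\phi(S)$; this lies in $\cl K(E)$ because $\cl K(E)$ is a two-sided ideal of $\cl L(E)$, and the identity $\Psi(k_1)k_2 = k_1 \phi(S)(k_2)$ is immediate, so $\phi(S)\in \DC(\cl K(E))$. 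Since $\phi$ is defined by exactly the same formula as Kasparov's classical $*$-isomorphism $\cl L(E)\cong \DC(\cl K(E))$, the restriction $\phi|_{\cl L(E)}$ coincides with Kasparov's isomorphism; in particular it is surjective onto $\DC(\cl K(E))$ and $*$-preserving.

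The main obstacle I expect is the reverse inclusion $\LM_E(\cl K(E))\subseteq \End_{\cl A}(E)$, which is where the non-degeneracy of the action and the existence of a contractive approximate unit in $\cl K(E)$ are used in tandem. The remaining portions of the proof are essentially packaging of Proposition~\ref{repn} together with the well-known Kasparov $*$-isomorphism for the double centralizer algebra.
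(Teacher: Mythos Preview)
Your proposal is correct and follows essentially the same strategy as the paper: reduce to Proposition~\ref{repn} applied to $\cl K(E)$ acting on $E$, establish $\LM_E(\cl K(E)) = \End_{\cl A}(E)$, and then observe that the resulting map $S\mapsto L_S$ restricts to Kasparov's $*$-isomorphism on $\cl L(E)$. The only noteworthy difference is in the proof of the inclusion $\LM_E(\cl K(E))\subseteq \End_{\cl A}(E)$: the paper argues directly on the dense set of vectors of the form $\eta\sca{\xi,\zeta}$ via $S(\eta\sca{\xi,\zeta}a)=S\theta_{\eta,\xi}(\zeta a)=S\theta_{\eta,\xi}(\zeta)a=S(\eta\sca{\xi,\zeta})a$, whereas you route the same density through a contractive approximate unit $(k_\lambda)$ of $\cl K(E)$ and the pointwise convergence $k_\lambda\xi\to\xi$; both arguments are standard and equivalent in spirit.
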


\begin{proof} In light of Proposition \ref{repn}, it suffices to verify that
$$\LM_{E}(\cl K (E)) = \End_{\cl A}(E).$$ Clearly $\End_{\cl A}(E)\subseteq \LM_{E}(\cl K (E))$. Conversely, let $S \in \LM_{E}(\cl K (E))$. If $a \in \cl A$ and $\eta , \xi, \zeta \in E$, then
\begin{align*}
S(\eta \sca{ \xi, \zeta} a)&=S\theta_{\eta , \xi}(\zeta a) = S\theta_{\eta , \xi}(\zeta ) a  \\
				&=S(\eta \sca{ \xi, \zeta}) a.
\end{align*}
However vectors of the form $\eta \sca{ \xi, \zeta}$,  $\eta , \xi, \zeta \in E$, are dense in E by \cite[Lemma 1.3.9]{ManT} and so $S$ is an $\cl A$-module map, as desired.

 Specializing now the mapping of~(\ref{Linmap}) to our setting, we obtain an isometric isomorphism
\begin{equation} \label{trueLinmap}
\phi \colon \End_{\cl A}(E) \longrightarrow \LC (\cl K (E))\colon S \longmapsto L_S.
\end{equation}
Furthermore, the restriction $\phi_{\mid \cl L(E)}$ coincides with Kasparov's map and the conclusion follows.
\end{proof}

\vspace{0.1in}

{\noindent}{\it Acknowledgements.} The present paper grew out of discussions between the author and Aristides Katavolos during the International Conference on Operator Algebras, which was held  at Nanjing University, China, June 20-23, 2013. The author would like to thank Aristides for the stimulating conversations and is grateful to the organizers of the conference for the invitation to participate and their hospitality.

\end{document}